\theoremstyle{plain}
\newtheorem{thm}{Theorem}[section]
\newtheorem{lmm}[thm]{Lemma}
\newtheorem{prp}[thm]{Proposition}
\theoremstyle{definition}
\theoremstyle{remark}
\newtheorem{rem}[thm]{Remark}
\def\SS{\mathbb{S}}
\def\R{\mathbb{R}}
\def\C{\mathbb{C}}
\def\Z{\mathbb{Z}}
\def\T{\mathbb{T}}
\def\N{\mathbb{N}}
\def\cP{{\mathcal{P}}}
\def\cH{{\mathcal{H}}}
\def\cY{{\mathcal{Y}}}
\def\cO{{\mathcal{O}}}
\def\bK{{\mathcal K}}
\def\Dn{N_n^{(d)}}
\def\Span{{\mathrm{Span}}}
\def\sU{{\ U}}
\def\conf{\mathrm{Conf}}
\def\th{\theta}
\def\inlawto{\stackrel{d}{\Rightarrow}}
\def\e{e}
 \def\trivial{\mathbf{1}}
 \def\bra{\langle}
 \def\ket{\rangle}
 \def\wbar{\bar{w}}
 \def\eps{\epsilon}
 \def\ginibre{\mathrm{Ginibre}}
 \def\Span{\mathrm{Span}} 
\def\SS{{\mathbb S}} 
\def\F{{\mathcal F}}
\def\la{\lambda}
\def\F21{{_2F_1}}
\numberwithin{equation}{section}
\title{Scaling limit for determinantal point processes \\ on spheres}
\dedicatory{Dedicated to the memory of Professor Yoichiro Takahashi} 
\author{Makoto \textsc{Katori}}
\address{Fakult\"{a}t f\"{u}r Mathematik, Universit\"{a}t Wien, Oskar-Morgenstern-Platz 1,
A-1090 Wien, Austria. On sabbatical leave from
Department of Physics, Faculty of Science and Engineering, Chuo University, Kasuga, Bunkyo-ku, Tokyo 112-8551,
Japan} 
\email{katori@phys.chuo-u.ac.jp}
\author{Tomoyuki \textsc{Shirai}}
\address{Institute of Mathematics for Industry, 
Kyushu University, 744 Motooka, Nishi-ku, Fukuoka 819-0395, Japan}  
\email{shirai@imi.kyushu-u.ac.jp}
\subjclass[2010]{Primary 60G55, 60B10; Secondary 60B20, 46E22.}
\keywords{Determinantal point process, Circular
unitary ensemble, Ginibre point process, Reproducing kernel,
Spherical ensemble, Harmonic ensemble, Bessel functions}         
\begin{document}
\begin{abstract} 
The unitary group with the Haar probability measure 
is called Circular Unitary Ensemble.  
All the eigenvalues lie on the unit
circle in the complex plane and they can be regarded as a
 determinantal point process on $\SS^1$. 
It is also known that 
the scaled point processes converge weakly to the determinantal point
process associated with the so-called sine kernel 
as the size of matrices tends to $\infty$. 
We extend this result to the case of high-dimensional spheres 
and show that the scaling limit processes are determinantal
 point processes associated with the kernels expressed by the
 Bessel functions of the first kind. 
\end{abstract}
\maketitle

\section{Introduction}\label{sec:intro}

Circular Unitary Ensemble (CUE) is the group $\sU(n)$ of $n
\times n$ unitary matrices with the normalized Haar
measure. 
It is well-known that the joint probability distribution of 
eigenvalues $\{e^{\sqrt{-1} \theta_j}\}_{j=1}^n$ is given by 
\[
p_{\text{CUE}}(\th_1,\th_2,\dots,\th_n)
= \frac{1}{(2\pi)^n n!}  \prod_{1 \le j < k \le n}
	|e^{\sqrt{-1} \theta_j} - e^{\sqrt{-1} \theta_k}|^2
\]
and they form a determinantal point process on $\T = \R / 2\pi \Z$ 
      with $\la(d\theta) = d\theta/(2\pi)$ on $\T$ and 
correlation kernel $K_n$ given by 
\begin{align*}
K_n(\theta, \theta') 
&= \sum_{k=0}^{n-1} 
e^{\sqrt{-1} k \theta} \overline{e^{\sqrt{-1} k \theta'}} 
= v(\theta) \underbrace{\frac{\sin[n (\theta - \theta')/2]}{\sin [(\theta - \theta')/2]}}_{:= \tilde{K}_n(\theta, \theta')} \overline{v(\theta')},  
\end{align*}
where $v(\theta) = e^{\sqrt{-1} (n-1) \theta/ 2}$. 
Since determinantal point processes (DPPs, for short in what
follows) are invariant under a gauge transformation
$K(\theta, \theta') 
\mapsto \tilde{K}(\theta, \theta') := u(\theta) K(\theta, \theta')
\overline{u(\theta')}$ with
$u :\T \to U(1)$, the kernel 
\[
\tilde{K}_n(\theta, \theta') 
= \frac{\sin[n (\theta - \theta')/2]}{\sin[(\theta - \theta')/2]} 
\]
defines the same DPP 
(see Section~\ref{sec:DPP} for the definition and some
invariance properties of DPPs). 
The empirical distribution $\frac{1}{n} \sum_{i=1}^n
\delta_{\theta_i}$ of points converges to the
uniform distribution on $\T$ almost surely. 
On the other hand, scaling as $x_i = n \theta_i \in \R$ 
yields 
\[
\frac{1}{n} \tilde{K}_n\big(\frac{x}{n},
\frac{x'}{n} \big) 
= \frac{1}{n} \frac{\sin[(x - x')/2]}{\sin [(x/n - x'/n)/2]} 
\to \frac{\sin[(x - x')/2]}{(x - x')/2} 
=: K_{\mathrm{sinc}}(x, x'). 
\]
Here we used the term \textit{sinc} instead of
\textit{sine}. 
From this observation, we can see that, as $n \to \infty$,  
\[
\text{$n$-point DPP on $\T$ $\inlawto$ 
the DPP on $\R^1$ with $K_{\mathrm{sinc}}$}, \quad 
\]
where $\inlawto$ means convergence in law. 
In this article, we discuss a generalization of this fact
(see Theorem~\ref{thm:main}): 
under suitable scaling, 
\[
\text{$n$-point DPP on $\SS^d$ $\inlawto$ 
the DPP on $\R^d$ with $K^{(d)}(x,y)$}, 
\]
where 
\[
 K^{(d)}(x,y) = \frac{1}{(2\pi |x-y|)^{d/2}}
 J_{d/2}(|x-y|). 
\]
Here $J_{\alpha}(x)$ is the Bessel function of the first
kind defined by 
\begin{equation}
 J_{\alpha}(x) = \sum_{k=0}^{\infty} \frac{(-1)^k}{k!
 \Gamma(k+\alpha+1)} \Big(\frac{x}{2}\Big)^{2k+\alpha}. 
\label{eq:Bessel-alpha} 
\end{equation}
For $d=1$, the correlation kernel $K^{(1)}$ 
coincides with $K_{\mathrm{sinc}}$ up to constant multiple.  
We also remark that the kernel $K^{(d)}$ is the reproducing kernel for
the so-called generalized Paley-Wiener space, which is the space of 
functions that are the Fourier transforms of $L^2$-functions
whose support is contained in the unit ball centered at the
origin in $\R^d$. 

The organization of this paper is as follows: In Section 2, we
discuss two DPPs on $\SS^2$ and show limit theorems for
these DPPs. In Section 3, we review spherical harmonics on
$\SS^{d}$ and their properties. In Section 4, we introduce 
determinantal point processes on $\SS^{d}$ by using
spherical harmonics and in Section 5 we show their scaled 
processes converge to the determinantal point processes
associated with the Bessel functions of the first kind. 

\section{Two determinantal point processes on $\SS^2$}\label{sec:DPPonS2}

In this section, 
we consider two generalizations of CUE on $\T \simeq \SS^1$ 
to the sphere $\SS^2$. 
First, we regard the joint distribution of CUE eigenvalues 
as 
\[
p_{\text{CUE}}(z_1,z_2,\dots,z_n)
= \frac{1}{(2\pi)^n n!}\prod_{1 \le j < k \le n} \|z_j - z_k\|_{\R^2}^2  \quad (z_j \in \SS^1 \subset \R^2)  
\]
by identifying $e^{\sqrt{-1} \th_j} \in \C$ with 
$z_j = e^{\sqrt{-1} \th_j} \in \SS^1 \subset \R^2$. 
By replacing $\SS^1 \subset \R^2$ with
$\SS^2 \subset \R^3$ on the right-hand side, 
we obtain a generalization of CUE eigenvalues,
which turns out to be a DPP on $\SS^2$. 

Second, as before, we regard the eigenvalues of CUE as the
DPP with 
kernel 
\[
K_n(\theta, \phi) 
= \sum_{k=0}^{n-1} e^{\sqrt{-1} k \theta} \overline{e^{\sqrt{-1} k
       \phi}} 
\]
and $\la(d\theta) = d\theta / (2\pi)$ on $\SS^1$. Here
       $e^{\sqrt{-1} k \theta}$ is an eigenfunction of the Laplacian
       $-\Delta_{\SS^1} = -d^2/d\theta^2$
       corresponding to the eigenvalue $k^2$, i.e., 
\[
 -\Delta_{\SS^1} e^{\sqrt{-1} k \theta} = k^2 e^{\sqrt{-1} k \theta}. 
\]
The kernel $K_n$ defines the projection onto the subspace
spanned by $\{e^{\sqrt{-1} k \theta}\}_{k=0}^{n-1}$ in $L^2(\SS^1)$. 
We can generalize CUE from this point of view by considering
the Laplace-Beltrami operator on $\SS^2$ and its eigenspaces
spanned by the spherical harmonics. 

In Sections~\ref{sec:spherical} and \ref{sec:harmonic}, 
we will see that those two point processes
converge to different DPPs in the bulk.

\subsection{Determinantal point processes}\label{sec:DPP}

In this subsection, we briefly review determinantal point
processes from the viewpoint of transformations. 
See, for instance, \cite{K86} for the theory of point
processes and \cite{HKPV,M75, ST0, ST1,S} for details of the basic
properties of determinantal point processes. 

Let $S$ be a locally compact Hausdorff
space with countable basis and $\la$ be a Radon measure on $S$.  
We denote by $\conf(S)$ the configuration space over $S$, the set of
non-negative integer valued Radon measures, which is equipped with
the topological Borel $\sigma$-field with respect to 
the vague topology. A point process on $S$ is 
a $\conf(S)$-valued random variable $\xi = \xi(\omega)$ on a probability space
$(\Omega, \mathcal{F}, P)$ 
or its probability distribution on $\conf(S)$.
For a locally trace class integral operator $K: L^2(S,\la) \to
L^2(S,\la)$ with $O \le K \le I$, 
there exists uniquely a probability measure on
$\conf(S)$ whose correlation function is given by 
\[
 \rho_n(s_1,\dots,s_n) = \det(K(s_i,s_j))_{i,j=1}^n, 
\]
where $K(x,y)$ is called a correlation kernel. 
This point process is called a determinantal point process associated
with $K$ and $\la$ (or just associated with $K$). 
The measure $\la$ is sometimes called a background measure. 
This situation arises in the reproducing kernel Hilbert
space setting. A Hilbert space $H$ of functions on $S$ is called a
reproducing kernel Hilbert space (RKHS) 
if there is a hermitian kernel $K : S\times
S \to \C$ such that (i) $K(\cdot, s') \in H$ for any $s' \in
S$ and (ii) 
$f(s') = \bra f, K(\cdot, s')\ket_H$ for any $f \in H$. 
When $H$ is realized as a closed subspace of $L^2(S,\la)$, the
kernel $K(s,s')$ defines a projection integral operator $K$ from 
$L^2(S, \la)$ onto $H$ so that one can define for such $H$
(and $K$) a DPP on $S$. In this sense, for example, the Ginibre point
process is the DPP associated with the Bargmann-Fock space,
which is the space of square-integrable analytic functions
on $\C$ with respect to the complex Gaussian
measure. The theory of reproducing kernel Hilbert spaces 
can be found in \cite{A50}
and several DPPs are discussed in this context
(cf. \cite{AGR19, BQ17, S16}). 

A pair of kernel and background measure of a DPP is not
unique. 
For example, the following changes of kernels and measures 
do not change the law of DPP. 

\noindent
(i) (Gauge transformation). For a non-vanishing measurable function 
$u : S \to \C$, 
\begin{equation}
 K(s,s') \mapsto \tilde{K}(s,s') := u(s) K(s,s')
  u(s')^{-1}. 
\label{eq:transformation1}
\end{equation}
(ii) For a measurable function $g : S \to [0,\infty)$, 
\begin{equation}
 (K(s,s'), g(s) \la(ds)) \mapsto (\sqrt{g(s)} K(s,s')
 \sqrt{g(s')}, \la(ds)).  
\label{eq:transformation2}
\end{equation}
For example, 
in the case of $S = \C$, the corresponding DPP 
associated with $K(z,w) = e^{z \wbar}$ and $\la(dz) =\pi^{-1}
e^{-|z|^2} dz$ is called a
Ginibre point process. From \eqref{eq:transformation2}, this DPP is also expressed as a DPP 
associated with $\tilde{K}(z,w) = e^{z \wbar - (|z|^2 + |w|^2)/2}$
and $\tilde{\la}(dz) =
\pi^{-1} dz$. 

For a one-to-one measurable mapping $h : X \to Y$ and 
a DPP $\xi = \sum_j \delta_{x_j}$ on $X$ associated with $K$ and
$\la$, the transformed point process 
$\tilde{\xi} = \sum_j \delta_{h(x_j)}$ turns out to be the DPP
on $Y$ associated with 
\begin{equation}
 \tilde{K}(y,y') = K(h^{-1}(y), h^{-1}(y')), \quad 
\tilde{\la}(dy) = (\la \circ h^{-1})(dy). 
\label{eq:transformation3} 
\end{equation}
By using this formula, we can easily see that 
if $\xi = \sum_j \delta_{x_j}$ is the DPP on $X=\R^d$ associated with
$K$ and $dx$, then the scaled point process 
$S_c(\xi) := \sum_j \delta_{c x_j} \ (c>0)$ is the DPP associated
with  
\begin{equation}
 K_c(y,y') := \frac{1}{c^d} K(\frac{y}{c}, \frac{y'}{c}),
 \quad \la(dy) = dy. 
\label{eq:transformation4} 
\end{equation}
Here we used \eqref{eq:transformation3} with $h(x)=cx$ and then
\eqref{eq:transformation2} with $g(x) \equiv c^{-d}$.  

We often use the fact that the measure $K(s,s) \la(ds)$ is
the mean measure of the DPP on $S$ associated with $K$ and $\la$, 
which describes the density of points of the DPP on $S$.

\subsection{Spherical ensemble}\label{sec:spherical}

The Ginibre ensemble is defined as a random matrix
$G=(G_{ij})_{i,j=1}^N$ whose
elements are i.i.d. and $G_{ij} \sim N_{\C}(0,1)$, which is the complex
 standard normal distribution. 
We write it 
\[
G \sim \ginibre(N) \Longleftrightarrow \text{$\{G_{ij}\}_{i,j=1}^N$ are
 i.i.d. and $G_{ij} \sim N_{\C}(0,1)$}. 
\]

Let $A, B \sim \ginibre(N)$ be independent. 
Krishnapur \cite{K09} showed that the eigenvalues of $A^{-1} B$ form a DPP on $\C$
with the following kernel and background measure
\[
K_N(z,w) = (1+z \wbar)^{N-1}, \quad 
\la(dz) = \frac{N}{\pi (1+|z|^2)^{N+1}} dz. 
\]
The corresponding reproducing kernel Hilbert space is 
the space of polynomials of degree $\le N-1$: 
\[
 H_{K_N} = \Span \{z^k : k=0,1,\dots,N-1\}
\]
in $L^2(\C, \la)$. 
The mean measure of the DPP is $K_N(z,z)\la(dz) = \frac{N}{\pi
(1+|z|^2)^2} dz$, which is the push-forward of $N$-times
the uniform measure on the Riemann sphere $\hat{\C} = \C
\cup \{\infty\}$ to $\C$. This fact 
suggests that it is natural to view this DPP as a point
process on $\hat{\C} \simeq \SS^2$ through the stereographic projection. 
The joint distribution with respect to the surface measure is given by 
\[
(\mathrm{const.}) \prod_{1 \le j < k \le N} \|u_j - u_k\|^2_{\R^3} 
\quad \text{on $\hat{\C} \simeq \SS^2$}. 
\]
This point process is again a DPP on $\SS^2$ and is called
the \textit{spherical ensemble} \cite{AZ15, K09}. 
This DPP is clearly 
$O(3)$-invariant, and uniformly distributed with density $N/4\pi$. 
This may be considered as a spherical version of CUE eigenvalues. 
The correlation kernel as DPP is given by 
\begin{align}
 K_N(u,u') 
&= K_N((\theta, \phi), (\theta', \phi')) \nonumber \\
&= \frac{N}{4\pi} \Big(
e^{\sqrt{-1}(\phi-\phi')} \sin(\th/2) \sin(\th'/2) + 
\cos(\th/2) \cos(\th'/2) \Big)^{N-1}
\label{eq:KNspherical}
\end{align}
with respect to the surface measure $\sigma(d\theta d\phi) = \sin
\theta d\theta d\phi$ on $\SS^2$, 
where $(\theta, \phi)$ is the polar coordinates of
$\SS^2$.  
As $N \to \infty$, the empirical measure $\frac{1}{N} 
\sum_{i=1}^N \delta_{u_i}$ converges
      weakly to the uniform measure on $\SS^2$ almost
      surely. 
We define a point process on the tangent space
$T_{e_3}(\SS^2)$ at the north pole $e_3 = (0,0,1)$ as 
the pullback of points on the sphere by 
the exponential map $\exp : T_{e_3}(\SS^2) \to \SS^2$, i.e.,
      using the polar coordinates $(\theta, \phi)$, 
\[
T_{e_3}(\SS^2) \ni (\th \cos \phi, \th \sin \phi) \mapsto 
(\sin \th \cos \phi, \sin \th \sin \phi, \cos \th) \in \SS^2. 
\]
We also identify $(\th \cos \phi, \th \sin \phi) \in 
T_{e_3}(\SS^2)$ with $\th e^{\sqrt{-1} \phi} \in \C$.  

Let $\xi_N$ be the spherical ensemble, 
which is an $N$-point DPP on $\SS^2$. 
Since this DPP is rotation invariant, it suffices to look at the
north pole $e_3 = (0,0,1)$. 
Let $T_{e_3}(\SS^2)$ be the tangent space at $e_3$. 
For fixed $\epsilon > 0$, we consider the pullback of
      points in $\xi_N$ on $\SS^2 \cap B_{\epsilon}(e_3)$ by the 
      exponential map $\exp : T_{e_3}(\SS^2) \to \SS^2$ and
      denote it by $\eta_N^{(\eps)}$, i.e., 
\[
 \eta_N^{(\eps)} := \sum_{u \in \xi_N : u \in
 B_{\eps}(e_3)} 
\delta_{\exp^{-1}(u)}, 
\] 
where $u \in \xi$ means $\xi(\{u\})=1$ and 
$B_{\eps}(e_3)$ is the $\eps$-neighborhood of $e_3$. 
For a configuration $\eta = \sum_j 
      \delta_{x_j} \in \conf(\R^d)$, 
we define a scaling map $S_c :
      \conf(\R^d) \to \conf(\R^d)$ by
\[
 S_c(\eta) = \sum_j \delta_{c x_j} \quad (c > 0). 
\]
In the rest of Section 2, we use this scaling map with $d=2$.
It follows from \eqref{eq:transformation4} 
that the scaled point process $S_c(\eta_N^{(\eps)})$
is the DPP associated with the kernel 
\[
K_{N,c, \epsilon}(x,y) = 
\frac{1}{c^2} K_N(u_c, v_c)\trivial_{B_{\epsilon}}(u_c)
 \trivial_{B_{\epsilon}}(v_c) \quad x, y \in
 T_{e_3}(\SS^2),  
\]
where $u_c = \exp(x/c)$ and $v_c =\exp(y/c)$, 
$B_{\epsilon} = B_{\epsilon}(e_3)$ and
$\trivial_{B_{\epsilon}}$ is its indicator function. 
For weak convergence of DPPs as $N, c \to \infty$, it suffices to show the
uniform convergence of the kernel $K_{N,c,\epsilon}$ on
each compact set $C \subset (T_{e_3}(\SS^2))^2$
(cf. Proposition 3.10 in \cite{ST1}). 
The function $\trivial_{B_{\epsilon}}(u_c)$ 
is eventually $1$ on every compact $C$ 
if $c$ is large enough so that we only
need to check the convergence of 
$K_{N,c}(x,y) := c^{-2} K_N(u_c, v_c)$.

Then we have the following proposition.  
\begin{prp}\label{prp:Ginibre}
The scaled point process $S_{\sqrt{N}}(\eta_N^{(\eps)})$ converges 
weakly to the Ginibre DPP with density $1/4\pi$. Here 
the Ginibre DPP with density $\rho$ is 
the DPP on $\C$ associated with the kernel and background
 measure 
 \[
 K(z,w) = e^{\pi \rho z \wbar}, \quad \la(dz) = \rho e^{-\pi
 \rho |z|^2} dz. 
 \]
\end{prp}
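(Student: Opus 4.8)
The plan is to use the reduction already set up in this section: by \eqref{eq:transformation4} and the cited criterion (Proposition 3.10 in \cite{ST1}), it suffices to prove that the scaled kernel $K_{N,\sqrt{N}}(x,y)=\frac{1}{N}K_N(u_{\sqrt{N}},v_{\sqrt{N}})$ converges, uniformly on each compact subset of $\C\times\C$, to the correlation kernel of the Ginibre DPP of density $1/4\pi$. Write $x=r_x e^{\sqrt{-1}\alpha_x}$ and $y=r_y e^{\sqrt{-1}\alpha_y}$ in $\C\simeq T_{e_3}(\SS^2)$. With $c=\sqrt{N}$, the exponential map sends $x/c$ and $y/c$ to points $u_c,v_c\in\SS^2$ with polar coordinates $(\th_x,\phi_x)=(r_x/\sqrt{N},\alpha_x)$ and $(\th_y,\phi_y)=(r_y/\sqrt{N},\alpha_y)$. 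Substituting these into \eqref{eq:KNspherical} and using $\frac{1}{N}\cdot\frac{N}{4\pi}=\frac{1}{4\pi}$ gives
\[
K_{N,\sqrt{N}}(x,y)=\frac{1}{4\pi}A_N^{\,N-1},\qquad
A_N:=e^{\sqrt{-1}(\alpha_x-\alpha_y)}\sin\tfrac{r_x}{2\sqrt{N}}\sin\tfrac{r_y}{2\sqrt{N}}+\cos\tfrac{r_x}{2\sqrt{N}}\cos\tfrac{r_y}{2\sqrt{N}}.
\]

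Next I would expand $A_N$ for large $N$. Using $\sin t=t+O(t^3)$ and $\cos t=1-t^2/2+O(t^4)$ together with the identity $r_xr_y e^{\sqrt{-1}(\alpha_x-\alpha_y)}=x\overline{y}$, one obtains
\[
A_N=1+\frac{1}{N}\Big(\frac{x\overline{y}}{4}-\frac{|x|^2+|y|^2}{8}\Big)+O(N^{-2}),
\]
where the $O(N^{-2})$ remainder is bounded uniformly for $(x,y)$ in any fixed compact set. Since $A_N\to1$, the principal logarithm is legitimate for large $N$, and multiplying by $N-1$ yields
\[
K_{N,\sqrt{N}}(x,y)=\frac{1}{4\pi}\exp\big((N-1)\log A_N\big)\longrightarrow \frac{1}{4\pi}\exp\Big(\frac{x\overline{y}}{4}-\frac{|x|^2+|y|^2}{8}\Big),
\]
the convergence being uniform on compacta precisely because the error estimate above is uniform there.

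It remains to identify the limiting kernel. Applying transformation \eqref{eq:transformation2} with $\rho=1/4\pi$, $g(z)=\rho e^{-\pi\rho|z|^2}$ and Lebesgue background measure $dz$, the Ginibre DPP of density $\rho$ associated with $K(z,w)=e^{\pi\rho z\overline{w}}$ and $\rho e^{-\pi\rho|z|^2}dz$ is equally described by the kernel $\rho\exp(\pi\rho z\overline{w}-\pi\rho(|z|^2+|w|^2)/2)$ with respect to $dz$. For $\rho=1/4\pi$ this is exactly $\frac{1}{4\pi}\exp(x\overline{y}/4-(|x|^2+|y|^2)/8)$, matching the limit above, which completes the argument. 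The only genuinely delicate point is making the Taylor expansion of $A_N$ uniform in $(x,y)$ over compact sets: the pointwise limit is immediate, but one must verify that the remainder stays $O(N^{-2})$ uniformly (which follows since $r_x=|x|$ and $r_y=|y|$ are bounded on compacta) so that the hypotheses of the convergence criterion in \cite{ST1} are genuinely met.
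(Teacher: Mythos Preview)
Your proof is correct and follows essentially the same route as the paper's: substitute the scaled polar coordinates into \eqref{eq:KNspherical}, Taylor-expand the inner factor to obtain $1+\tfrac{1}{4N}\{x\overline{y}-\tfrac12(|x|^2+|y|^2)\}+O(N^{-2})$, raise to the $(N-1)$th power, and then identify the limit with the Ginibre kernel via \eqref{eq:transformation2}. Your version is slightly more explicit about the uniformity on compacta needed to invoke the convergence criterion, but the argument is otherwise identical.
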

\begin{proof}[Proof of Proposition~\ref{prp:Ginibre}]
We put 
$\theta = r N^{-1/2}$ and $\theta' = r' N^{-1/2}$. 
As $N \to \infty$, we see that 
\begin{align*}
\lefteqn{e^{\sqrt{-1}(\phi-\phi')} \sin(\th/2) \sin(\th'/2) + 
\cos(\th/2) \cos(\th'/2)} \\ 
&\sim 
e^{\sqrt{-1}(\phi-\phi')} \frac{1}{4} \th \th' + 
1 - \frac{1}{8} (\th^2 + (\th')^2) \\
&= 1 + \frac{1}{4N} \{z\wbar - \frac{1}{2} (|z|^2 +
 |w|^2)\},  
\end{align*}
where $z = r e^{\sqrt{-1} \phi}$ and $w = r' e^{\sqrt{-1}
 \phi'}$. Therefore, from \eqref{eq:KNspherical}, we have 
\begin{align*}
K_{N, \sqrt{N}}(z,w)
&= \frac{1}{N} K_N(\exp(\frac{z}{\sqrt{N}}),
 \exp(\frac{w}{\sqrt{N}})) \\ 
&\sim \frac{1}{4\pi} 
\Big(1 + \frac{1}{4N} \{z \wbar - \frac{1}{2} (|z|^2 + |w|^2)\} \Big)^{N-1} \\
&\to \frac{1}{4\pi} e^{\frac{1}{4} \{z\wbar - \frac{1}{2} (|z|^2 + |w|^2)\}}. 
\end{align*}
This together with \eqref{eq:transformation2} completes the proof. 
\end{proof}

\begin{rem}
Spherical ensemble has also been studied as the $2D$
one-component plasma (OCP) or $2D$
Coulomb gas (log-gas) on $\SS^2$ (see, for instance,
\cite{Ca81} and Section 15.6 in \cite{For10}).
Proposition~\ref{prp:Ginibre} 
was observed in \cite{Ca81} in the context of
$2D$ OCP on a sphere,  
where the assertion is stated as follows: the thermodynamic limit of the OCP on the sphere is identical to that of the OCP on the plane. 
\end{rem}

\subsection{DPP associated with the spherical harmonics on $\SS^2$}\label{sec:harmonic}

Let us consider the Laplace-Beltrami operator 
$-\Delta_{\SS^2}$ on $L^2(\SS^2)$. The spectrum consists of the discrete
eigenvalue $\ell(\ell+1)$ with multiplicity $2\ell+1$ for
$\ell =0,1,\dots$. 
We denote by $E_{\ell}$ the eigenspace corresponding to
the eigenvalue $\ell(\ell+1)$ 
and then there is a spectral decomposition of $L^2(\SS^2)$ as 
\[
       L^2(\SS^2) \simeq \bigoplus_{\ell=0}^{\infty}
       E_{\ell}. 
\]
Each eigenspace $E_{\ell}$ is spanned by the so-called spherical
harmonics given by 
\begin{equation}
 Y_{m}^{\ell}(\theta, \phi) := \sqrt{\frac{2\ell+1}{4\pi}
      \frac{(\ell-m)!}{(\ell+m)!}} P_m^{\ell}(\cos \theta)
      e^{\sqrt{-1} m\phi} \quad (-\ell \le m \le \ell), 
\label{eq:Yml}
\end{equation}
where $P_m^{\ell}(x)$ is the associated Legendre polynomial
of degree $m$, i.e.,  
\[
 E_{\ell} = \Span\{Y_m^{\ell} : m=-\ell, -\ell+1, \dots, \ell\}. 
\]

Now we consider the subspace corresponding to the eigenvalues
up to $n(n+1)$, that is  
       $E_{\le n} := \oplus_{\ell=0}^{n} E_{\ell}$. 
The reproducing kernel for $E_{\le n}$ is given by 
\begin{align*}
 K_n(x,y) 
&= \sum_{\ell=0}^{n} 
\sum_{m=-\ell}^{\ell} Y_m^{\ell}(x)
       \overline{Y_m^{\ell}(y)}
=: \sum_{\ell=0}^{n} Z_{\ell}(x,y), 
\end{align*}
where $Z_{\ell}(x,y)$ is the reproducing kernel for
       $E_{\ell}$. 

We consider the DPP $\xi_n$ on $\SS^2$ associated with
$K_n(x,y)$ and the 
Lebesgue measure on $\SS^2$. 
The number of points of DPP $\xi_n$ on $\SS^2$ 
is $(n+1)^2$. 
As $n \to \infty$, the empirical measure converges
      weakly to the uniform measure on $\SS^2$. 
For this DPP, we consider the same problem as in the
previous subsection. For fixed $\epsilon > 0$, let 
       $\eta_n^{(\eps)}$ be the pullback of
       $\xi_n$ on $\SS^2 \cap B_{\epsilon}(e_3)$ by the 
       exponential map $\exp : T_{e_3}(\SS^2) \to \SS^2$. 
Here we have $(n+1)^2$-points and hence the scaling $S_n$ makes
the density of points on $T_{e_3}(\SS^2)$ of $O(1)$. 

\begin{thm}
The scaled point process $S_n(\eta_n^{(\eps)})$ converges 
weakly to the DPP on $T_{e_3}(\SS^2) \simeq \R^2$ 
associated with the kernel and background measure 
\[
K(x,y) = \frac{1}{2\pi |x-y|} J_1(|x-y|), \ \la(dx) = dx, 
\]
where $J_1(r)$ is the Bessel function of the first kind
 \eqref{eq:Bessel-alpha} with $\alpha=1$. 
\end{thm}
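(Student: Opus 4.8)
The plan is to follow exactly the route used for Proposition~\ref{prp:Ginibre}: by the criterion recalled just before that proposition (Proposition~3.10 in \cite{ST1}), it suffices to prove that the scaled kernel $K_{n,n}(x,y) = n^{-2} K_n(\exp(x/n), \exp(y/n))$ converges, uniformly on each compact subset of $(T_{e_3}(\SS^2))^2$, to $K(x,y) = J_1(|x-y|)/(2\pi|x-y|)$. Two reductions make the kernel explicit. First, by the addition theorem for spherical harmonics, the reproducing kernel $Z_\ell$ of $E_\ell$ depends only on the angle $\gamma = \gamma(x,y)$ between the two unit vectors, namely $Z_\ell(x,y) = \frac{2\ell+1}{4\pi} P_\ell(\cos\gamma)$ with $P_\ell$ the (ordinary) Legendre polynomial; hence $K_n(x,y) = \frac{1}{4\pi}\sum_{\ell=0}^{n}(2\ell+1)P_\ell(\cos\gamma)$. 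Second, the Christoffel--Darboux identity for Legendre polynomials, evaluated using $P_\ell(1)=1$, collapses this sum to the closed form
\[
K_n(x,y) = \frac{n+1}{4\pi}\cdot\frac{P_{n+1}(\cos\gamma) - P_n(\cos\gamma)}{\cos\gamma - 1}.
\]

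Next I would record the geometry of the exponential map. Writing $x/n = \theta_1 e^{\sqrt{-1}\phi_1}$ and $y/n = \theta_2 e^{\sqrt{-1}\phi_2}$ in $T_{e_3}(\SS^2)\simeq\C$, the images $\exp(x/n),\exp(y/n)$ have colatitudes $\theta_1,\theta_2$ and azimuths $\phi_1,\phi_2$, so the spherical law of cosines gives $\cos\gamma = \cos\theta_1\cos\theta_2 + \sin\theta_1\sin\theta_2\cos(\phi_1-\phi_2)$. Expanding for large $n$ yields $1 - \cos\gamma = \tfrac12|x/n - y/n|^2 + O(n^{-4}) = |x-y|^2/(2n^2) + O(n^{-4})$; equivalently, writing $r := |x-y|$, one has $n\gamma \to r$ and $\cos\gamma - 1 \sim -r^2/(2n^2)$, uniformly on compact sets.

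The analytic core is the asymptotics of the numerator. Using the Mehler--Heine asymptotics in Hilb's uniform form, $P_m(\cos\gamma) = \sqrt{\gamma/\sin\gamma}\,J_0((m+\tfrac12)\gamma) + O(m^{-3/2})$ with $\sqrt{\gamma/\sin\gamma} = 1 + O(\gamma^2)$, and since the prefactor and index-shift corrections are $O(n^{-2})$ while $(m+\tfrac12)\gamma \to r$, I would expand the difference as
\[
P_{n+1}(\cos\gamma) - P_n(\cos\gamma) = J_0\big((n+\tfrac32)\gamma\big) - J_0\big((n+\tfrac12)\gamma\big) + o(n^{-1}) = -\gamma J_1\big((n+\tfrac12)\gamma\big) + o(n^{-1}),
\]
using $J_0' = -J_1$; with $\gamma \sim r/n$ this equals $-\tfrac{r}{n}J_1(r) + o(n^{-1})$. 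Substituting both expansions,
\[
K_{n,n}(x,y) = \frac{1}{n^2}\cdot\frac{n+1}{4\pi}\cdot\frac{-\tfrac{r}{n}J_1(r)+o(n^{-1})}{-r^2/(2n^2)+O(n^{-4})} \longrightarrow \frac{1}{4\pi}\cdot\frac{2J_1(r)}{r} = \frac{J_1(|x-y|)}{2\pi|x-y|},
\]
the asserted kernel; the limit extends continuously across the diagonal $r=0$, where $J_1(r)/r \to \tfrac12$ recovers the density $1/(4\pi)$.

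The main obstacle is the precision needed in the previous step: because the factor $(n+1)$ multiplies a difference of size $O(n^{-1})$, I must control the Mehler--Heine approximation one order beyond the leading limit, with a genuinely $o(n^{-1})$ error that survives the subtraction, and make this uniform in $(x,y)$ on compact sets. A separate treatment is required near the diagonal $r=0$, where $\cos\gamma-1\to 0$ produces a removable singularity that must be handled by Taylor expansion of $J_1(r)/r$ rather than by the pointwise asymptotics above. An alternative that avoids the delicate difference is to keep the sum and read $n^{-2}K_n$ as a Riemann sum: setting $\ell = sn$, Mehler--Heine gives $P_\ell(\cos\gamma)\approx J_0(sr)$, whence $n^{-2}K_n \to \frac{1}{2\pi}\int_0^1 s\,J_0(sr)\,ds = \frac{J_1(r)}{2\pi r}$ by the identity $\int_0^a tJ_0(t)\,dt = aJ_1(a)$; there the difficulty shifts to proving uniformity of the approximation across all $\ell\le n$ (including small $\ell$) and bounding the Riemann-sum error.
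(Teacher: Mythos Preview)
Your plan is sound and would go through: the Christoffel--Darboux closed form is correct, the geometric expansion $\cos\gamma = 1 - |x-y|^2/(2n^2)+O(n^{-4})$ is correct, and Hilb's form of Mehler--Heine gives an $O(n^{-3/2})$ remainder which indeed survives the subtraction as $o(n^{-1})$. The diagonal issue you flag is genuinely removable, and your Riemann-sum alternative (leading to $\frac{1}{2\pi}\int_0^1 sJ_0(sr)\,ds$) is exactly the integral representation \eqref{eqn:lim_K} in the paper.

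The paper, however, takes a different and cleaner route. Rather than writing the kernel as a difference quotient $(P_{n+1}-P_n)/(\cos\gamma-1)$ and controlling the subtraction, it recognizes the sum $\sum_{\ell=0}^n(2\ell+1)P_\ell(\cos\gamma)$ as a \emph{single} Jacobi polynomial: in the notation of Section~\ref{sec:finite_DPP}, $\bK_n^{(2)}(u,v)=\Dn\,Q_n^{(1,0)}(u\cdot v)$ with $\Dn=(n+1)^2$ (Proposition~\ref{prop:Kd}). The limit then follows in one stroke from the Mehler--Heine formula for Jacobi polynomials (Lemma~\ref{lem:jacobi}), $Q_n^{(1,0)}(\cos(r/n))\to \Gamma(2)(r/2)^{-1}J_1(r)=2J_1(r)/r$, with uniformity on compacts built in. This sidesteps both obstacles you identify---no delicate cancellation in a difference, and no separate diagonal analysis since $Q_n^{(1,0)}$ is evaluated at a single argument. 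It also explains why the argument extends verbatim to $\SS^d$: the same Lemma~\ref{lem:jacobi} applied to $Q_n^{(d/2,d/2-1)}$ yields $J_{d/2}$ for every $d$, whereas your Christoffel--Darboux route is tied to the Legendre case $d=2$.
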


The proof is given in Section~\ref{sec:infinite_DPP} 
in higher dimensional setting. 

\section{Spherical harmonics on $\SS^d$}\label{sec:spherical_harmonics}

In this section, we recall basic properties of spherical
harmonics on $\SS^d$ (cf. \cite{Nom18}). 

For $d \in \N$, let
$\cP:=\cP(\R^{d+1})$ be a vector space of all complex-valued
polynomials on $\R^{d+1}$,
and $\cP_{\ell}, \ell \in \Z_{\ge 0} := \{0,1,2, \dots \}$, be its subspaces
consisting of homogeneous polynomials of degree $\ell$;
$p(x)=\sum_{|\alpha|=\ell} c_{\alpha} x^{\alpha}$,
$c_{\alpha} \in \C, x \in \R^{d+1}$.
The vector space of all harmonic functions in $\cP$ is
denoted by $\cH :=\{p \in \cP : \Delta p=0\}$
and let $\cH_{\ell} := \cH \cap \cP_{\ell}$ for $\ell \in \Z_{\ge 0}$.

Now we consider a unit sphere in $\R^{d+1}$ denoted by 
$\SS^d$, in which we use  the polar coordinates 
for $u=(u_1, \dots, u_{d+1}) \in \SS^d$, 
\begin{align}
u_1 &= \sin \theta_d \cdots \sin \theta_2 \sin \theta_1,
\nonumber\\
u_k &= \sin \theta_d \cdots \sin \theta_k \cos \theta_{k-1},
\nonumber\\
u_{d+1} &= \cos \theta_d,
\nonumber\\
& \qquad \theta_1 \in [0, 2 \pi), \quad
\theta_k \in [0, \pi], \quad k=2, \dots, d.
\label{eqn:coordinate1}
\end{align}
Note that $|u|^2 := \sum_{j=1}^{d+1} u_j^2 =1$. 
The standard measure on $\SS^d$ is then given as
\begin{equation}
\sigma_d (du)
:= \sin^{d-1} \theta_{d} \sin^{d-2} \theta_{d-1}
\cdots \sin \theta_2 d \theta_1 \cdots d \theta_{d},
\quad u \in \SS^d.
\label{eqn:sigma}
\end{equation}
The total measure of $\SS^d$ is calculated as
\begin{equation}
\omega_d := \sigma_d(\SS^d)
= \frac{2 \pi^{(d+1)/2}}{\Gamma((d+1)/2)}.
\label{eqn:omega}
\end{equation}
We write the restriction of harmonic polynomials in $\cH_{\ell}$ on
$\SS^d$ as
\[
\cY_{\ell} := \left\{ h\big|_{\SS^d} : h \in \cH_{\ell} \right\},
\quad \ell \in \Z_{\ge 0}.
\]
It is known that 
\begin{equation}
d_{\ell} := \dim \cY_{\ell}
= \frac{2\ell+d-1}{d-1} 
{\ell + d -2 \choose \ell}. 
\label{eqn:dim}
\end{equation}

The reproducing kernel $Z_{\ell}(u,v), u, v \in \SS^d$ is
defined as a unique function in $\cY_{\ell}$ such that
the following reproducing properties hold,
\[ 
	Y(u) = \int_{\SS^d} Z_{\ell}(u,v) Y(v) \mu_d(dv)
\quad \forall Y \in \cY_{\ell}, 
\]
where $\mu_d = \omega_d^{-1} \sigma_d$ 
is normalized to be a probability measure on $\SS^d$. 
Consider an orthonormal basis $\{Y^{\ell}_j \}_{j=1}^{d_{\ell}}$ of $\cY_{\ell}$
with respect to $\mu_d$;
\begin{equation}
\int_{\SS^d} Y^{\ell}_n(u) \overline{Y^{\ell}_m(u)} \mu_d(du) 
=\delta_{nm}, \quad n, m \in \Z_{\ge 0}.
\label{eqn:orthonormal}
\end{equation}
We note that this normalization is different from
\eqref{eq:Yml} up to constant multiple for $d=2$. 
The reproducing kernel $Z_{\ell}$ of $\cY_{\ell}$ in 
$L^2(\SS^d, \mu_d)$ is expanded as
\begin{equation}
Z_{\ell}(u,v) 
= \sum_{j=1}^{d_{\ell}} Y^{\ell}_j(u) \overline{Y^{\ell}_j(v)}.
\label{eqn:Psi1}
\end{equation}
For $\lambda > -1/2$, we define the ultraspherical
polynomial by 
\begin{equation}
P^{\lambda}_{\ell}(s)
:= \F21 \left( -\ell, \ell+2 \lambda; \lambda+ \frac{1}{2} ; 
\frac{1-s}{2} \right),
\label{eqn:P1}
\end{equation}
where $\F21$ denotes the Gauss hypergeometric function,
\[
\F21(\alpha, \beta; \gamma; z)
:= \sum_{n=0}^{\infty} \frac{(\alpha)_n (\beta)_n}{(\gamma)_n}
\frac{z^n}{n!},
\]
with 
$(\alpha)_n := \alpha(\alpha+1) \cdots (\alpha+n-1)
= \Gamma(\alpha+n)/\Gamma(\alpha)$ and $(\alpha)_0 := 1$. 
Then the following equality is established,
\begin{equation}
Z_{\ell}(u,v) 
=d_{\ell} P^{(d-1)/2}_{\ell}(u \cdot v),
\quad u, v \in \SS^d,
\label{eqn:Psi2}
\end{equation}
where $d_{\ell}$ is given by (\ref{eqn:dim}). 
It is clear that $Z_{\ell}$ is $O(d+1, \R)$-invariant in the sense
that 
\begin{equation}
Z_{\ell}(gu,gv) = Z_{\ell}(u,v)
\quad \forall g \in O(d+1, \R), \quad
\forall u, v \in \SS^d.
\label{eqn:invariance}
\end{equation}
If we define the stabilizer subgroup $L_0$ of
$SO(d+1, \R)$ at $\e_{d+1}$ as 
\[
L_0 = \left\{
{\small 
\begin{pmatrix}
A & \ \textbf{0} \\
{^{t}\textbf{0}} & \ 1 
\end{pmatrix}
}
: A \in SO(d, \R) \right\},
\]
and let
\[
\cY_{\ell}^{L_0} :=
\{ Y \in \cY_{\ell} : Y(g u)=Y(u), \quad
\forall g \in L_0, \quad \forall u \in \SS^d \},
\]
then $Z_{\ell}(\cdot, \e_{d+1}) \in \cY_{\ell}^{L_0}$.
The space $\cY_{\ell}^{L_0}$ is the one-dimensional vector space
spanned by $P_{\ell}^{(d-1)/2}(u \cdot \e_{d+1})$.
In general, any $L_0$-invariant function is a constant
on each $L_0$-orbit 
$\cO_s :=\{u \in \SS^d : u \cdot \e_{d+1} = s\}$ for $s \in [-1, 1]$, 
and hence functions in $\cY_{\ell}^{L_0}$ is called
the zonal harmonics of degree $\ell$. 
In the next section, 
we also use the following Gegenbauer polynomial of degree
$\ell$ defined by 
\begin{equation}
C_{\ell}^{\lambda}(s):= \binom{\ell+2 \lambda-1}{\ell} 
P^{\lambda}_{\ell}(s).   
\label{eqn:Gegenbauer}
\end{equation}
The generating function of $C_{\ell}^{\la}(s)$ is given by 
 \[
  (1-2sz + z^2)^{-\la} = \sum_{\ell=0}^{\infty} 
 C_{\ell}^{\la}(s) z^{\ell}. 
 \]

\section{Harmonic ensembles on $\SS^d$}
\label{sec:finite_DPP}
In Section~\ref{sec:DPPonS2}, we discussed 
two DPPs on $\SS^2$ as generalizations of CUE eigenvalues on
$\SS^1$ and showed limit theorems for those DPPs. 
In this section, we consider DPPs on $\SS^d$ sometimes 
called \textit{harmonic ensembles} (cf. \cite{BMOC16}),
which generalize the DPP treated in Section~\ref{sec:harmonic}

For $d \in \N$, $n \in \Z_{\ge 0}$ and $u, v \in \SS^d$, we set 
\begin{align}
\bK^{(d)}_n(u, v)
&:= \sum_{\ell=0}^{n} Z_{\ell}(u,v)
= \sum_{\ell=0}^{n} d_{\ell} P^{(d-1)/2}_{\ell}(u \cdot v)
\nonumber\\
&= \sum_{\ell=0}^{n} \frac{2\ell+d-1}{d-1} C^{(d-1)/2}_{\ell}(u \cdot v).
\label{eqn:K1}
\end{align}
Since $\bK^{(d)}_n(u, v)$ is the reproducing kernel for 
$\cY_{\le n} := \oplus_{\ell=0}^n \cY_{\ell}$ whose 
dimension is $\Dn := \sum_{\ell=0}^{n} d_{\ell}$ with
respect to the background measure $\mu_d$, we have 
a DPP $\xi_n$ on $\SS^d$ such that the correlation kernel is
given by (\ref{eqn:K1}), which is called the harmonic
ensemble on $\SS^d$. The harmonic ensemble is rotation invariant and 
thus the density is uniform on $\SS^d$ given by 
\begin{align*}
\bK^{(d)}_n(u, u) 
&=\sum_{\ell=0}^{n} d_k P^{(d-1)/2}_{\ell}(1)
= \Dn, 
\end{align*}
where we have used the fact that $P^{\lambda}_{\ell}(1) = 1$
for $\la> -1/2$. 
It is easy to see that 
\begin{equation}
 \Dn = \frac{d+2n}{d} {d+n-1 \choose
 n} = \frac{2}{d!} n^d + o(n^d). 
\label{eqn:Dim}
\end{equation}
We note that the number of points in $\xi_n$ is 
equal to $\Dn$ almost surely. 

If we introduce the Jacobi polynomials defined as 
\[
 P_n^{(\alpha, \beta)}(x) 
= \frac{(\alpha+1)_n}{n!} \F21(-n, n + \alpha + \beta + 1;
 \alpha+1 ; \frac{1-x}{2}), 
\]
and 
\begin{equation}
 Q_n^{(\alpha, \beta)}(x) 
= \frac{P_n^{(\alpha, \beta)}(x)}{P_n^{(\alpha, \beta)}(1)}, 
\label{eq:Qndef} 
\end{equation}
then we have the following. 
\begin{lmm} 
For $n \in \Z_{\ge 0}$ and $\la > -1/2$, 
\begin{equation}
\sum_{\ell=0}^n \frac{\ell +\la}{\la}
 C^{\la}_{\ell}(x)
 = \frac{2n + 2\la +1}{2\la +1} {2\la + n \choose n} 
   Q_n^{(\la+1/2, \la-1/2)}(x). 
\label{eq:gegen2jacobi}
\end{equation}
\end{lmm}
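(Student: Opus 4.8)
The plan is to pass to generating functions to linearize the weighted partial sum, and only at the end match hypergeometric coefficients.

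First I would start from the generating function $(1-2xz+z^2)^{-\la}=\sum_{\ell\ge 0}C_\ell^\la(x)z^\ell$ recorded just above the statement. Writing $F(z)$ for the left-hand side and differentiating in $z$ gives $zF'(z)=\sum_\ell \ell\, C_\ell^\la(x)z^\ell = 2\la z(x-z)(1-2xz+z^2)^{-\la-1}$, so that
\[
\sum_{\ell\ge 0}\frac{\ell+\la}{\la}C_\ell^\la(x)z^\ell
= F(z)+\frac1\la\, zF'(z)
= \frac{1-z^2}{(1-2xz+z^2)^{\la+1}}
= (1-z^2)\sum_{\ell\ge 0}C_\ell^{\la+1}(x)z^\ell .
\]
Comparing coefficients of $z^\ell$ yields the telescoping relation $\tfrac{\ell+\la}{\la}C_\ell^\la(x)=C_\ell^{\la+1}(x)-C_{\ell-2}^{\la+1}(x)$, with the convention $C_j^{\la+1}\equiv 0$ for $j<0$. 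Summing over $0\le\ell\le n$ then collapses to
\[
\sum_{\ell=0}^n \frac{\ell+\la}{\la}C_\ell^\la(x) = C_n^{\la+1}(x)+C_{n-1}^{\la+1}(x).
\]

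It then remains to identify this sum of two Gegenbauer polynomials with the normalized Jacobi polynomial on the right. Setting $t:=(1-x)/2$ and using $P^\mu_\ell(1)=1$, the definitions of $\F21$, $P^{(\alpha,\beta)}_n$ and \eqref{eq:Qndef} give $Q_n^{(\la+1/2,\la-1/2)}(x)=\F21(-n,n+2\la+1;\la+\tfrac32;t)$, while $C_\ell^{\la+1}(x)=\binom{\ell+2\la+1}{\ell}\,\F21(-\ell,\ell+2\la+2;\la+\tfrac32;t)$. Hence, comparing coefficients of $t^k$, the claim reduces to the scalar identity
\[
\binom{n+2\la+1}{n}\frac{(-n)_k(n+2\la+2)_k}{(\la+\tfrac32)_k\,k!} + \binom{n+2\la}{n-1}\frac{(-(n-1))_k(n+2\la+1)_k}{(\la+\tfrac32)_k\,k!} = \frac{2n+2\la+1}{2\la+1}\binom{2\la+n}{n}\frac{(-n)_k(n+2\la+1)_k}{(\la+\tfrac32)_k\,k!}
\]
for every $k\ge 0$.

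The only real obstacle is this last coefficient identity, and it is purely computational. I would factor out the common $(-n)_k(n+2\la+1)_k/[(\la+\tfrac32)_k\,k!]$ by means of $(n+2\la+2)_k=(n+2\la+1)_k\,(n+2\la+1+k)/(n+2\la+1)$ and $(-(n-1))_k=(-n)_k\,(n-k)/n$; this turns the claim into an \emph{affine} identity in $k$ relating the two binomial coefficients on the left to the constant on the right. Rewriting the binomials through Pochhammer symbols (for instance $\binom{n+2\la+1}{n}=(2\la+2)_n/n!$ and $\binom{2\la+n}{n}=(2\la+1)_n/n!$), one checks that the coefficient of $k$ cancels identically, and the $k$-free part follows from $(2\la+2)_n=(2\la+1)_n(n+2\la+1)/(2\la+1)$. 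I expect the bookkeeping of these Pochhammer/Gamma manipulations to be the fiddly part, together with the harmless vanishing of $(-n)_k$ and $(-(n-1))_k$ for large $k$, which I would dispose of by treating the displayed relation as a polynomial identity in $k$ valid on the range where the factors are nonzero; no genuine difficulty arises.
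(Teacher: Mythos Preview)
Your argument is correct. The first half---deriving the telescoping identity $\tfrac{\ell+\la}{\la}C_\ell^\la=C_\ell^{\la+1}-C_{\ell-2}^{\la+1}$ and collapsing the sum to $C_n^{\la+1}+C_{n-1}^{\la+1}$---coincides with the paper's, though you obtain the three-term relation via the generating function while the paper simply cites it as a known Gegenbauer recurrence. The real difference is in the second half. To identify $C_n^{\la+1}+C_{n-1}^{\la+1}$ with the right-hand side, the paper takes a structural route: it rewrites each $C_m^{\la+1}$ as a multiple of $Q_m^{(\alpha,\alpha)}$ with $\alpha=\la+\tfrac12$, then invokes a standard Jacobi contiguous relation to combine $Q_n^{(\alpha,\alpha)}$ and $Q_{n-1}^{(\alpha,\alpha)}$ into $Q_n^{(\alpha,\alpha-1)}$. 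You instead expand both sides as terminating $\F21$ series in $t=(1-x)/2$ and match coefficients directly, reducing everything to an affine identity in $k$ that is checked by elementary Pochhammer algebra. Your route is more self-contained and avoids quoting the contiguous relation, at the price of a slightly fiddlier bookkeeping step; the paper's route is shorter once one is willing to import the contiguous relation from the literature. Both lead to the same conclusion with no gaps.
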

\begin{proof}
By using the contiguous relation (Eq.(18.9.5) in
 \cite{NIST10})
\[
(2n+\alpha+\beta+1) P_n^{(\alpha,\beta)}(x) 
= (n+\alpha+\beta+1) 
P_n^{(\alpha,\beta+1)}(x)  
+ (n+\alpha) P_{n-1}^{(\alpha,\beta+1)}(x)
\]
together with $P_n^{(\alpha,\beta+1)}(1)/P_{n-1}^{(\alpha,\beta+1)}(1) =
 (n+\alpha)/n$, we have 
\begin{equation}
(2n+\alpha+\beta+1) Q_n^{(\alpha,\beta)}(x) 
= (n+\alpha+\beta+1) 
Q_n^{(\alpha,\beta+1)}(x)  
+ n Q_{n-1}^{(\alpha,\beta+1)}(x). 
\label{eq:Qn}
\end{equation}
From the recurrence relation of the Gegenbauer polynomials
(see, Eq.(18.9.7) in \cite{NIST10}) and the definition, 
\[
(n+\lambda) C^{\lambda}_n(x)
=\lambda \{C^{\lambda+1}_n(x)-C^{\lambda+1}_{n-2}(x)\} \ (n
 \ge 2), \quad 
C^{\lambda}_0(x)=1, \ C^{\lambda}_1(x)=2 \lambda x, 
\]
we see that 
\begin{align*}
\sum_{\ell=0}^n \frac{\ell +\la}{\la}
 C^{\la}_{\ell}(x)
&= C^{\la}_0(x)+\frac{1 + \la}{\la} C^{\la}_1(x) 
+ \sum_{\ell=2}^n \Big\{ C^{\la+1}_{\ell}(x) 
- C^{\la+1}_{\ell-2}(x) \Big\} \\
 &= 
 C^{\la+1}_{n}(x) + C^{\la+1}_{n-1}(x).  
\label{eqn:K1_2}
\end{align*}
The Gegenbauer polynomial can also be written as 
\[
 C_n^{\alpha+1/2}(x) = 
{n+2\alpha \choose n}
Q_n^{(\alpha, \alpha)}(x). 
\]
From \eqref{eq:Qn} with $\alpha = \beta+1$, 
we obtain 
\begin{align*}
C^{\alpha+1/2}_n(x) + C^{\alpha+1/2}_{n-1}(x) 
&= 
{n+2\alpha \choose n} Q_n^{(\alpha, \alpha)}(x)
+ {n+2\alpha-1 \choose n-1} Q_{n-1}^{(\alpha, \alpha)}(x)\\
&= 
{n+2\alpha-1 \choose n} 
\Big\{\frac{n+2\alpha}{2\alpha} Q_n^{(\alpha, \alpha)}(x)
+ \frac{n}{2\alpha} Q_{n-1}^{(\alpha, \alpha)}(x)
\Big\} \\
&= 
{n+2\alpha-1 \choose n} \frac{n+\alpha}{\alpha}
Q_n^{(\alpha, \alpha-1)}(x). 
\end{align*}
Putting $\alpha=\la+1/2$, we obtain the desired formula
 \eqref{eq:gegen2jacobi}. 
\end{proof}

The correlation kernel $\bK^{(d)}_n$ has the following simple form. 
\begin{prp} \label{prop:Kd}
For $u, v \in \SS^d$, $d \in \N$, and $n \in \N$, 
\begin{equation}
\bK^{(d)}_n(u, v)
= \Dn Q_n^{(d/2,d/2-1)}(u \cdot v). 
\label{eqn:K1_4}
\end{equation}
\end{prp}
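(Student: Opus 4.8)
The plan is to read off \eqref{eqn:K1_4} as a direct specialization of the Lemma. The starting point is the last expression in \eqref{eqn:K1},
\[
\bK^{(d)}_n(u, v) = \sum_{\ell=0}^{n} \frac{2\ell+d-1}{d-1} C^{(d-1)/2}_{\ell}(u \cdot v),
\]
and the key observation is that the weight $\frac{2\ell+d-1}{d-1}$ is exactly $\frac{\ell+\la}{\la}$ for the choice $\la = (d-1)/2$, while the index $(d-1)/2$ on the Gegenbauer polynomial is this same $\la$. Thus $\bK^{(d)}_n(u,v)$ coincides term-by-term with the left-hand side of \eqref{eq:gegen2jacobi} evaluated at $x = u \cdot v$, and I may invoke the Lemma outright.

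Applying the Lemma with $\la = (d-1)/2$, I would first simplify the two Jacobi parameters: $\la + \frac{1}{2} = \frac{d}{2}$ and $\la - \frac{1}{2} = \frac{d}{2} - 1$, so that $Q_n^{(\la+1/2,\la-1/2)} = Q_n^{(d/2,d/2-1)}$, which is precisely the polynomial appearing on the right of \eqref{eqn:K1_4}. What then remains is to recognize the scalar prefactor $\frac{2n+2\la+1}{2\la+1}\binom{2\la+n}{n}$ as $\Dn$. Substituting $2\la+1 = d$, $2\la+n = n+d-1$, and $2n+2\la+1 = 2n+d$ turns this prefactor into $\frac{d+2n}{d}\binom{n+d-1}{n}$, which is exactly the closed form for $\Dn$ recorded in \eqref{eqn:Dim}. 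Combining the three identifications yields \eqref{eqn:K1_4}.

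I do not expect a genuine obstacle here: once the Lemma is in hand, the proposition is pure bookkeeping. The only points demanding a little care are confirming that the Gegenbauer index and the linear weight share the same parameter $\la = (d-1)/2$ (so that the Lemma applies verbatim), and checking the prefactor identity against \eqref{eqn:Dim}; both are elementary. In effect all the analytic content has been front-loaded into the Lemma, whose proof rests on the contiguous relation \eqref{eq:Qn} and the Gegenbauer recurrence, and this proposition is simply the statement of that computation in the geometrically meaningful parameters $(d/2, d/2-1)$.
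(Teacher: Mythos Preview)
Your proposal is correct and follows exactly the paper's own argument: the paper's proof consists of the single sentence ``Setting $\la = (d-1)/2$ in the formula \eqref{eq:gegen2jacobi}, we obtain \eqref{eqn:K1_4},'' and you have simply unpacked the bookkeeping behind that specialization.
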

\begin{proof}
Setting $\la = (d-1)/2$ in the formula 
\eqref{eq:gegen2jacobi}, we obtain \eqref{eqn:K1_4}. 
\end{proof}

In the next section, we use the following Mehler-Heine type
asymptotic formula for Jacobi polynomials. 
\begin{lmm}[Theorem 8.1.1.\cite{Sz}]\label{lem:jacobi}
For $\alpha, \beta \in \R$ and $z \in \C$, 
\[
 \lim_{n \to \infty} n^{-\alpha} 
P_n^{(\alpha, \beta)}(\cos \frac{z}{n})
= (z/2)^{-\alpha} J_{\alpha}(z),  
\]
and hence,  
\[
 \lim_{n \to \infty} Q_n^{(\alpha, \beta)}(\cos \frac{z}{n})
= \Gamma(\alpha+1) (z/2)^{-\alpha} J_{\alpha}(z). 
\]
The convergence takes place uniformly on every compact set
in $\C$. 
\end{lmm}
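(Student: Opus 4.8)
The statement is the classical Mehler--Heine asymptotic, so one option is simply to invoke \cite{Sz}; but the self-contained route I would take works directly from the terminating hypergeometric representation, avoiding integral representations or the Jacobi differential equation. I would start from
\[
P_n^{(\alpha,\beta)}(x) = \frac{(\alpha+1)_n}{n!}\sum_{k=0}^n \frac{(-n)_k\,(n+\alpha+\beta+1)_k}{(\alpha+1)_k\,k!}\Big(\frac{1-x}{2}\Big)^k
\]
and substitute $x=\cos(z/n)$, so that $(1-x)/2=\sin^2(z/(2n))$. The plan is to peel off a factor $n^{\alpha}$ from the prefactor and a factor $(z/2)^{2k}$ from the $k$-th summand, and then recognize the limiting series as \eqref{eq:Bessel-alpha}.

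Two elementary asymptotics drive everything. First, $\Gamma(n+\alpha+1)/\Gamma(n+1)\sim n^{\alpha}$ gives $n^{-\alpha}(\alpha+1)_n/n!\to 1/\Gamma(\alpha+1)$. Second, for each fixed $k$ I would rewrite the $k$-th summand as
\[
\frac{1}{(\alpha+1)_k\,k!}\cdot\frac{(-n)_k}{n^k}\cdot\frac{(n+\alpha+\beta+1)_k}{n^k}\cdot\big(n\sin\tfrac{z}{2n}\big)^{2k},
\]
where $(-n)_k/n^k\to(-1)^k$, $(n+\alpha+\beta+1)_k/n^k\to1$, and $n\sin(z/(2n))\to z/2$, so the summand tends to $(-1)^k(z/2)^{2k}/((\alpha+1)_k\,k!)$. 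Using $(\alpha+1)_k=\Gamma(k+\alpha+1)/\Gamma(\alpha+1)$, the sum of these limits is exactly $\Gamma(\alpha+1)(z/2)^{-\alpha}J_{\alpha}(z)$ by \eqref{eq:Bessel-alpha}; multiplying by the prefactor limit $1/\Gamma(\alpha+1)$ yields $(z/2)^{-\alpha}J_{\alpha}(z)$.

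The one step that genuinely requires care is justifying the interchange of the limit with a sum whose upper index $n$ grows, and doing so uniformly for $z$ in a compact $K\subset\C$; this is the main obstacle. I would build an $n$-independent majorant: from $|(-n)_k|\le n^k$, the bound $(n+\alpha+\beta+1)_k/n^k\le 3^k$ valid for all $k\le n$ once $n$ is large, and $|\sin(z/(2n))|\le C_K|z|/(2n)$ on $K$, the factors $n^{2k}$ cancel against $\sin^{2k}$ and the $k$-th summand is dominated by $A^k|z|^{2k}/((\alpha+1)_k\,k!)$ with $A$ independent of $n$ and of $z\in K$. This majorant is a convergent (entire, Bessel-type) series, so dominated convergence for series delivers both the termwise limit and its uniformity on $K$; the rest is bookkeeping.

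Finally, the $Q_n^{(\alpha,\beta)}$ form follows with no extra work: since $P_n^{(\alpha,\beta)}(1)=\binom{n+\alpha}{n}=(\alpha+1)_n/n!$ and hence $n^{-\alpha}P_n^{(\alpha,\beta)}(1)\to1/\Gamma(\alpha+1)$, dividing through by $P_n^{(\alpha,\beta)}(1)$ as in \eqref{eq:Qndef} gives
\[
Q_n^{(\alpha,\beta)}(\cos\tfrac{z}{n})=\frac{n^{-\alpha}P_n^{(\alpha,\beta)}(\cos(z/n))}{n^{-\alpha}P_n^{(\alpha,\beta)}(1)}\longrightarrow \Gamma(\alpha+1)(z/2)^{-\alpha}J_{\alpha}(z),
\]
with the uniform convergence on compacts inherited directly from that of the numerator.
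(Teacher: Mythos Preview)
The paper does not give a proof of this lemma at all: it is stated as a quotation of Theorem~8.1.1 in \cite{Sz} and then used as a black box in Proposition~\ref{thm:asymK}. Your write-up therefore goes beyond what the paper does, supplying a correct self-contained argument. The termwise hypergeometric computation you give is in fact essentially the classical proof (and close to Szeg\H{o}'s own), so there is no methodological divergence to discuss; your dominated-convergence majorant is the right mechanism for the uniform-on-compacts claim. One small caveat: the bound $A^{k}|z|^{2k}/((\alpha+1)_k\,k!)$ tacitly assumes $(\alpha+1)_k\neq 0$, i.e.\ that $\alpha$ is not a negative integer; this is harmless here since the only application in the paper is with $\alpha=d/2\ge 1/2$, but if you want the lemma literally ``for $\alpha,\beta\in\R$'' you should either note the exceptional $\alpha$ or handle it by a limiting argument.
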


\section{Scaling limits of DPPs on $\SS^d$} \label{sec:infinite_DPP}
We consider the harmonic ensemble on $\SS^d$ and then 
define a DPP on the tangent space $T_p(\SS^d)$ at a point $p \in \SS^d$ 
as the pullback of points by the exponential map $\exp : T_p(\SS^d) \to \SS^d$. 
By rotation invariance, it suffices to look at 
the neighborhood of the `north pole' $\e_{d+1}$ on $\SS^d$ as
before. 
For $u \in \SS^d$, 
we use the polar coordinates 
$(\theta_1,\theta_2,\dots, \theta_d)$ in \eqref{eqn:coordinate1}
to write $\Omega(u):=(\Omega_1, \dots, \Omega_d) \in
\SS^{d-1}$ by setting 
\begin{align}
\Omega_1 &= \sin \theta_{d-1} \cdots \sin \theta_2 \sin \theta_1,
\nonumber\\
\Omega_k &= \sin \theta_{d-1} \cdots \sin \theta_k \cos \theta_{k-1},
\nonumber\\
\Omega_d &= \cos \theta_{d-1},
\nonumber\\
& \qquad \theta_1 \in [0, 2 \pi), \quad
\theta_k \in [0, \pi], \quad k=2, \dots, d-1.
\label{eqn:v-coordinate}
\end{align}
The tangent space $T_{\e_{d+1}}(\SS^d)$ is thus identified with 
\begin{equation}
T_{e_{d+1}}(\SS^d) \simeq \{\theta_d \Omega(u) \in \R^d : \theta_d \ge
0, u \in \SS^d\}, 
\label{eq:tangentaten} 
\end{equation}
and then the exponential map is given by 
\begin{equation}
T_{\e_{d+1}}(\SS^d) \ni (\theta_d \Omega_1,\dots, \theta_d \Omega_d) 
\mapsto 
 ((\sin \theta_d) \Omega_1,\dots, (\sin \theta_d) \Omega_d, 
 \cos \theta_d) \in \SS^d.  
\label{eq:tangentexp} 
\end{equation}
Let $\xi_n$ be the DPP on $\SS^d$ 
associated with the kernel $\bK^{(d)}_n(u, v)$ 
defined in the previous section. 
The number of points in $\xi_n$ is 
equal to $\Dn$ almost surely. 
For this DPP and fixed $\epsilon > 0$, we define 
   $\eta_n^{(\eps)}$ as the pullback of $\xi_n$ restricted
   to $\SS^d \cap B_{\epsilon}(\e_{d+1})$ by the 
       exponential map $\exp : T_{\e_{d+1}}(\SS^d) \to \SS^d$. 
We note that the scaling $S_{n}$ makes 
the density of points on $T_{\e_{d+1}}(\SS^d)$ of $O(1)$. 
For $u, v \in \SS^d$, we write 
$x = \theta_d \Omega(u), y = \vartheta_d \Omega(v) \in
T_{\e_{d+1}}(\SS^d)$  
as in \eqref{eq:tangentaten}.  
Since the pullback of $\mu_d$ on $\SS^d$ 
is locally equal to $\omega_d^{-1}$ times 
the Lebesgue measure $dx$ on $T_{e_{d+1}}(\SS^d) \simeq
\R^d$, it follows from \eqref{eq:transformation3} and 
\eqref{eq:transformation4} that 
the kernel of the scaled DPP
$S_n(\eta_n^{(\epsilon)})$ with respect to the Lebesgue measure $dx$
is 
\begin{equation}
K^{(d)}_{n,\eps}(x,y) 
= \frac{1}{n^d \omega_d} \bK_n^{(d)}(u_n, v_n) 
\trivial_{B_{\epsilon}}(u_n) 
\trivial_{B_{\epsilon}}(v_n), 
\label{eq:knleps}
\end{equation}
where $u_n = \exp(x/n)$ and 
$v_n = \exp(y/n)$. 

In this setting, we have the following limit formula for correlation kernels. 
\begin{prp}
\label{thm:asymK}
Assume that $(u^{(n)}, v^{(n)})_{n \in \N}$ 
is a sequence of pairs of points in $\SS^d$ such that
\[
u^{(n)} \cdot v^{(n)} = \cos \frac{r}{n}
\quad \mbox{with $0 < r < \infty$}.
\]
Then the limit
\begin{equation}
k^{(d)}(r) =
\lim_{n \to \infty} \frac{1}{n^d  \omega_d} \bK^{(d)}_{n}(u^{(n)}, v^{(n)})
\label{eqn:def_lim_K}
\end{equation}
exists and have the following expressions,
\begin{align}
k^{(d)}(r) 
&= 
\frac{J_{d/2}(r)}{(2 \pi r)^{d/2}}, 
\label{eqn:lim_K2} \\
&= \frac{1}{(2 \pi)^{d/2} r^{(d-2)/2}}
\int_0^1 s^{d/2} J_{(d-2)/2}(rs) ds
\label{eqn:lim_K}. 
\end{align}
\end{prp}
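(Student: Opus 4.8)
The plan is to reduce the statement to the closed form for the kernel from Proposition~\ref{prop:Kd} and the Mehler--Heine asymptotics of Lemma~\ref{lem:jacobi}, and then verify the two expressions for $k^{(d)}(r)$ agree by a standard Bessel computation. First I would invoke \eqref{eqn:K1_4} with $u^{(n)}\cdot v^{(n)} = \cos(r/n)$ to write
\[
\frac{1}{n^d \omega_d}\bK^{(d)}_n(u^{(n)}, v^{(n)})
= \frac{\Dn}{n^d \omega_d}\, Q_n^{(d/2,\,d/2-1)}\!\big(\cos\tfrac{r}{n}\big).
\]
The dimension asymptotics \eqref{eqn:Dim} give $\Dn/n^d \to 2/d!$, while Lemma~\ref{lem:jacobi} with $\alpha = d/2$, $\beta = d/2-1$, $z = r$ gives $Q_n^{(d/2,d/2-1)}(\cos(r/n)) \to \Gamma(d/2+1)(r/2)^{-d/2}J_{d/2}(r)$. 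Multiplying the three limits shows that the limit in \eqref{eqn:def_lim_K} exists and equals $\tfrac{2}{d!\,\omega_d}\Gamma(d/2+1)(r/2)^{-d/2}J_{d/2}(r)$.

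The next step is purely arithmetic: collapsing this constant into the form \eqref{eqn:lim_K2}. Substituting $\omega_d = 2\pi^{(d+1)/2}/\Gamma((d+1)/2)$ from \eqref{eqn:omega} and using $d! = \Gamma(d+1)$, the prefactor becomes $\frac{\Gamma((d+1)/2)\Gamma(d/2+1)}{\pi^{(d+1)/2}\Gamma(d+1)}(r/2)^{-d/2}$. Here the Legendre duplication formula, in the shape $\Gamma(\tfrac{d+1}{2})\Gamma(\tfrac{d+2}{2}) = 2^{-d}\sqrt{\pi}\,\Gamma(d+1)$, is exactly what cancels $\Gamma(d+1)$ and leaves $2^{-d}\sqrt{\pi}$; since $\Gamma((d+2)/2) = \Gamma(d/2+1)$, together with $(r/2)^{-d/2} = 2^{d/2}r^{-d/2}$ this reduces the prefactor to $(2\pi r)^{-d/2}$, yielding \eqref{eqn:lim_K2}.

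Finally, to obtain the integral representation \eqref{eqn:lim_K} I would show it coincides with \eqref{eqn:lim_K2} by a one-line computation. Writing $\mu = (d-2)/2$ (so that $\mu + 1 = d/2$) and substituting $t = rs$ turns $\int_0^1 s^{d/2}J_{(d-2)/2}(rs)\,ds$ into $r^{-(d/2+1)}\int_0^r t^{\mu+1}J_\mu(t)\,dt$. The standard recurrence $\frac{d}{dt}\big[t^{\mu+1}J_{\mu+1}(t)\big] = t^{\mu+1}J_\mu(t)$ integrates this to $r^{-(d/2+1)}\,r^{\mu+1}J_{\mu+1}(r) = r^{-1}J_{d/2}(r)$, and multiplying by $(2\pi)^{-d/2}r^{-(d-2)/2}$ recovers $(2\pi r)^{-d/2}J_{d/2}(r)$. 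Hence the two displayed expressions agree.

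I expect no serious obstacle: given Proposition~\ref{prop:Kd} and Lemma~\ref{lem:jacobi}, the existence of the limit is immediate and the only delicate point is the gamma-function bookkeeping, where invoking the duplication formula is the single non-mechanical move. It is also worth noting that \eqref{eqn:def_lim_K} requires only the \emph{pointwise} convergence furnished by Lemma~\ref{lem:jacobi} at the fixed value $z = r$; the uniform statement of that lemma is what will be needed later for the weak-convergence argument, not here.
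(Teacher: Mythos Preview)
Your proposal is correct and follows essentially the same route as the paper: apply Proposition~\ref{prop:Kd} together with the asymptotics $\Dn/n^d \to 2/d!$ and Lemma~\ref{lem:jacobi}, then simplify the constant via \eqref{eqn:omega} and Legendre's duplication formula, and finally identify \eqref{eqn:lim_K} with \eqref{eqn:lim_K2} using the antiderivative $\int t^{\mu+1}J_\mu(t)\,dt = t^{\mu+1}J_{\mu+1}(t)$. Your remark that only pointwise convergence is needed here, with uniformity reserved for the weak-convergence step, is accurate and worth keeping.
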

\begin{proof}
By \eqref{eqn:omega}, Proposition~\ref{prop:Kd} and Lemma~\ref{lem:jacobi}, we see that 
\[
\lim_{n \to \infty} \frac{1}{n^d \omega_d} \bK^{(d)}_{n} 
\big(u^{(n)}, v^{(n)}\big)
= \frac{2}{d!} \frac{\Gamma((d+1)/2)}{2 \pi^{(d+1)/2}}
 \Gamma\Big(\frac{d}{2}+1 \Big) \Big(\frac{2}{r}\Big)^{d/2} J_{d/2}(r)
= \left( \frac{1}{2\pi r} \right)^{d/2}
J_{d/2}(r).
\]
Here we used Legendre's duplication formula for the Gamma
 function (Eq.(5.5.5) in \cite{NIST10})
\[
\Gamma(z)\Gamma\Big(z+\frac{1}{2}\Big) = 2^{1-2z} \sqrt{\pi}
 \Gamma(2z). 
\]
The convergence takes place uniformly in any compacts in
 $\C$. Hence (\ref{eqn:lim_K2}) is proved.
If we use the integral formula (see, for instance, Eq.(10.22.1) in \cite{NIST10}),
\[
\int z^{\nu+1} J_{\nu}(z) dz = z^{\nu+1} J_{\nu+1}(z),
\]
we can derive (\ref{eqn:lim_K}) from (\ref{eqn:lim_K2}),
since
\[
\frac{1}{r^{(d+2)/2}} \int_0^r u^{(d-2)/2+1} J_{(d-2)/2}(u) du
= \int_0^1 s^{d/2} J_{(d-2)/2}(rs) ds.
\]
Thus the proof is complete. 
\end{proof}

This result implies that for each $d \in \N$ 
we obtain a limiting DPP on $\R^d$
such that it is uniform on $\R^d$ 
and the correlation kernel is given by
\begin{equation}
K^{(d)}(x, y) = k^{(d)}(|x-y|),
\quad x, y \in \R^d,
\label{eqn:bK1}
\end{equation}
where $k^{(d)}$ is defined by (\ref{eqn:lim_K2}). 
See also the formula \eqref{eq:smallk} below for odd $d$. 

Putting it all together, we have the following. 
\begin{thm}\label{thm:main}
The scaled point process $S_n(\eta_n^{(\eps)})$ converges 
weakly to the DPP on $T_{e_{d+1}}(\SS^d) \simeq \R^d$ 
associated with the kernel 
\begin{equation}
K^{(d)}(x,y) 
= \frac{1}{(2\pi |x-y|)^{d/2}} J_{d/2}(|x-y|)
\label{eq:knxy}
\end{equation}
and background measure $dx$. 
The kernel is also expressed in terms of Fourier transform
 as follows: 
\begin{equation}
K^{(d)}(x,y) 
= \frac{1}{(2\pi)^{d}}
\int_{\R^d} {\bf 1}_{B_1^{(d)}}(u) e^{\sqrt{-1} u \cdot (x-y)}
du, 
\label{eq:knxy2}
\end{equation}
where $B_1^{(d)}$ is the unit ball centered at the origin in $\R^d$. 
\end{thm}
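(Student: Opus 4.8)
The plan is to prove Theorem~\ref{thm:main} in two independent stages: first the weak convergence of the scaled point processes, and then the Fourier-transform representation \eqref{eq:knxy2} of the limiting kernel. For the weak convergence I would invoke the criterion that uniform convergence of the correlation kernels on every compact subset of $(T_{\e_{d+1}}(\SS^d))^2 \simeq (\R^d)^2$ forces weak convergence of the associated DPPs (cf. Proposition~3.10 in \cite{ST1}), exactly as in Section~\ref{sec:DPPonS2}. Thus it suffices to show that the scaled kernel $K^{(d)}_{n,\eps}$ of \eqref{eq:knleps} converges to $K^{(d)}(x,y) = k^{(d)}(|x-y|)$ uniformly on each compact $C \subset (\R^d)^2$. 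Fix such a $C$. Since $x/n, y/n \to 0$ uniformly on $C$, for all large $n$ the pulled-back points $u_n = \exp(x/n)$ and $v_n = \exp(y/n)$ lie in $B_\eps(\e_{d+1})$, so the indicators in \eqref{eq:knleps} are identically $1$ on $C$; hence the problem reduces to the uniform convergence of $\frac{1}{n^d\omega_d}\bK_n^{(d)}(u_n,v_n)$, which by Proposition~\ref{prop:Kd} equals $\frac{\Dn}{n^d\omega_d}\,Q_n^{(d/2,\,d/2-1)}(u_n\cdot v_n)$.

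The key geometric point, and the step I expect to be the main obstacle, is to pass from the pointwise hypothesis $u^{(n)}\cdot v^{(n)} = \cos(r/n)$ of Proposition~\ref{thm:asymK}, with a fixed $r$, to a statement uniform in $(x,y)$, since here the spherical inner product $u_n\cdot v_n$ is only \emph{approximately} $\cos(|x-y|/n)$. Writing $x = \theta_d\,\Omega(u)$ and $y = \vartheta_d\,\Omega(v)$, and letting $\gamma$ be the angle between the radial directions $\Omega(u),\Omega(v)$, a direct computation from \eqref{eq:tangentexp} gives $u_n\cdot v_n = \sin(\theta_d/n)\sin(\vartheta_d/n)\cos\gamma + \cos(\theta_d/n)\cos(\vartheta_d/n)$. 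Defining $r_n = r_n(x,y)\ge 0$ by $u_n\cdot v_n = \cos(r_n/n)$ and Taylor-expanding, one finds $u_n\cdot v_n = 1 - \frac{|x-y|^2}{2n^2} + O(n^{-4})$, so that $r_n \to |x-y|$ uniformly on $C$; this is precisely the statement that the distortion of the exponential map vanishes in the scaling limit.

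Since Lemma~\ref{lem:jacobi} provides convergence of $Q_n^{(d/2,\,d/2-1)}(\cos(z/n))$ to $\Gamma(d/2+1)(z/2)^{-d/2}J_{d/2}(z)$ uniformly on compacts in $\C$, and the latter limit is continuous, I would replace the fixed argument by $r_n$: a standard triangle-inequality estimate combined with $r_n \to |x-y|$ and $\frac{\Dn}{n^d\omega_d} \to \frac{2}{d!\,\omega_d}$ yields $\frac{\Dn}{n^d\omega_d}Q_n^{(d/2,\,d/2-1)}(\cos(r_n/n)) \to k^{(d)}(|x-y|)$ uniformly on $C$, with $k^{(d)}$ as in \eqref{eqn:lim_K2}. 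This establishes the weak convergence to the DPP with kernel \eqref{eq:knxy} and background measure $dx$.

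Finally, for the Fourier representation \eqref{eq:knxy2}, I would show directly that its right-hand side equals the integral expression \eqref{eqn:lim_K}. Setting $\zeta = x-y$ and $r = |\zeta|$, passing to polar coordinates $u = s\omega$ with $s\in[0,1]$ and $\omega\in\SS^{d-1}$, and using the standard spherical-average (Bochner) identity $\int_{\SS^{d-1}} e^{\sqrt{-1}\,s\,\omega\cdot\zeta}\,d\sigma(\omega) = (2\pi)^{d/2}(sr)^{-(d-2)/2}J_{(d-2)/2}(sr)$, one obtains $\frac{1}{(2\pi)^d}\int_{B_1^{(d)}} e^{\sqrt{-1}\,u\cdot\zeta}\,du = \frac{1}{(2\pi)^{d/2}r^{(d-2)/2}}\int_0^1 s^{d/2}J_{(d-2)/2}(rs)\,ds = k^{(d)}(r)$, the last equality being exactly \eqref{eqn:lim_K}. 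This identifies the two expressions for $K^{(d)}$ and completes the proof.
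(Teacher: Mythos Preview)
Your proposal is correct and follows essentially the same route as the paper: compute $u_n\cdot v_n$ from the exponential map, reduce to the Mehler--Heine asymptotic for $Q_n^{(d/2,\,d/2-1)}$ via Proposition~\ref{prop:Kd}, and conclude weak convergence from uniform convergence of kernels on compacts. Two places where you are in fact more explicit than the paper: (i) you bridge the gap between the exact hypothesis $u^{(n)}\cdot v^{(n)}=\cos(r/n)$ of Proposition~\ref{thm:asymK} and the approximate equality $u_n\cdot v_n=\cos(|x-y|/n)+o(n^{-2})$ by introducing $r_n$ and invoking the uniformity of Lemma~\ref{lem:jacobi} on compacts, whereas the paper simply cites Proposition~\ref{thm:asymK}; and (ii) you actually derive the Fourier representation \eqref{eq:knxy2} via the Bochner identity and \eqref{eqn:lim_K}, a step the paper's proof does not carry out.
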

\begin{proof}
First we assume that both $x, y \in \R^d$ are within a ball
 of radius less than the injectivity radius at $e_{d+1}$. 
Let $u_{n} = \exp(x/n)$ and $v_{n} = \exp(y/n)$. 
Then, by \eqref{eq:tangentaten} and \eqref{eq:tangentexp}, we see that  
\begin{align*}
 u_{n} \cdot v_{n} 
&= \sin \frac{\th_d}{n} \sin \frac{\vartheta_d}{n} 
\Omega(u_1) \cdot \Omega(v_1)+ \cos \frac{\th_d}{n} \cos \frac{\vartheta_d}{n} \\
&= \frac{\theta_d \vartheta_d}{n^2} 
\Omega(u_1) \cdot \Omega(v_1)+ 1 - \frac{1}{2} 
\frac{\theta_d^2 + \vartheta_d^2}{n^2} + o(\frac{1}{n^2}) \\
&= 1 - \frac{1}{2n^2} |x - y|^2 + o(\frac{1}{n^2}) \\
&= \cos \frac{|x - y|}{n} + o(\frac{1}{n^2}). 
\end{align*}
For general $x,y \in \R^d$, if necessary, 
by taking sufficiently large $M$ and considering $x/M$ and $y/M$
 instead of $x$ and $y$ themselves, we can show the same
 equality above. 
Therefore, from \eqref{eqn:K1}, \eqref{eq:knleps} and
 Proposition~\ref{thm:asymK}, we have \eqref{eq:knxy}. 
The weak convergence follows from the uniform convergence of
 the kernel on any compacts as mentioned in the proof of 
Proposition~\ref{thm:asymK}. 
\end{proof}

The limiting DPP associated with $K^{(d)}(x,y)$ is motion
invariant and the density of points is equal to $V_d / (2\pi)^d$,
where $V_d$ is the volume of the unit ball in $\R^{d}$. 
The kernel $K^{(d)}(x,y)$ is sometimes called the reproducing kernel for 
a generalized Paley-Wiener space. 
This kernel also appears in an asymptotic formula for Szeg\H{o}
kernel in the study of universality in \cite{Z01}. 

\begin{rem}
While the scaling was $S_{\sqrt{N}}$ in
 Section~\ref{sec:spherical}, 
here is $S_n$. 
The symbol $N$ in Section~\ref{sec:spherical} was used
 for the number of points on the sphere $\SS^2$. 
If we use the number of points 
$\Dn$ defined in \eqref{eqn:Dim} as parameter, then $S_n$ can also be
 written as $S_{(\Dn)^{1/d}}$. 
\end{rem}

\begin{rem}
The Bessel function $J_{m + 1/2}(x)$ with $m \in \N$
can be expressed as a linear combination of 
$\sin x$ and $\cos x$. 
The spherical Bessel function of the first kind is as
follows: 
\[
 j_m(x) := \sqrt{\frac{\pi}{2x}} J_{m + 1/2}(x). 
\]
Rayleigh's formula (Equation (10.51.3) in \cite{NIST10}) states that 
\[
 j_m(x) = (-x)^m \Big(\frac{1}{x} \frac{d}{dx} \Big)^m \ 
 \frac{\sin x}{x}, 
\]
from which it is easy to see that 
\begin{equation}
 k^{(d)}(r) = \Big(\frac{-1}{2 \pi r} \frac{d}{dr} \Big)^{(d-1)/2} \ 
 \frac{\sin r}{\pi r} \quad \text{for odd $d$}.  
\label{eq:smallk} 
\end{equation}
\end{rem}

\vskip 5mm
\textbf{Acknowledgments.} 
This work was supported by the Grant-in-Aid for Scientific
Research (C) (No.26400405), (B) (No.18H01124), and (S)
(No.16H06338) of Japan Society for the Promotion of Science.
One of the present authors (MK) thanks Christian Krattenthaler very much for his hospitality in Fakult\"{a}t f\"{u}r Mathematik, Universit\"{a}t Wien, where this study was started
on his sabbatical leave from Chuo University.
The authors express their gratitude to Michael Schlosser for
valuable discussion. 
The authors would also like to thank the referee for
several comments on our manuscript.


\vskip 1cm


\begin{thebibliography}{99}

\bibitem{AGR19} Abreu,~L.~D., Gr\"ochenig,~K. and
	Romero,~J.~L.,  
	Harmonic analysis in phase space and finite
	Weyl-Heisenberg ensembles, \textit{J. Stat. Phys.}
	\textbf{174} (2019), 1104--1136. 

\bibitem{AZ15}Alishahi,~K. and Zamani,~M., The
	spherical ensemble and uniform distribution of
	points on the sphere, 
	\textit{Electron. J. Probab.} \textbf{20} (2015), no. 23,
	1--27. 

\bibitem{A50} Aronszajn,~N., Theory of reproducing
	kernels, \textit{Trans. Amer. Math. Soc.} \textbf{68} (1950),
	337--404.  

\bibitem{BMOC16} Beltr\'an,~C., Marzo,~J. and
	Ortega-Cerd\`a., 
	Energy and discrepancy of rotationally invariant
	determinantal point processes in high dimensional
	spheres, \textit{J. Complexity} \textbf{37} (2016), 76--109. 

\bibitem{BQ17} Bufetov,~A.~I. and Qiu,~Y., Determinantal point
	processes associated with Hilbert spaces of
	holomorphic
	functions, \textit{Commun. Math. Phys.} \textbf{351} (2017) 
	1--44. 

 \bibitem{Ca81} Caillol,~J.~M., Exact results for a
	 two-dimensional one-component plasma on a sphere, 
	 \textit{Journal de Physique Lettres} \textbf{42} (1981),
	 245--247. 

\bibitem{For10} Forrester,~P.~J.,  
{\it Log-gases and Random Matrices}, London Math. Soc. Monographs,
Princeton University Press, Princeton (2010).  

\bibitem{K86} Kallenberg,~O., \textit{Random measures},
	Fourth ed. Akademie-Verlag, Berlin; Academic Press,
	Inc., London, 1986. 

\bibitem{Nom18}
	Nomura,~T., \textit{Spherical Harmonics and Group Representations}.
	Nihon-Hyoronsya, 2018 (Japanese). 

\bibitem{NIST10}
Olver,~F.~W.~J., Lozier,~D.~W., Boisvert,~R.~F. and Clark,~C.~W.(ed): {\it NIST Handbook of Mathematical Functions}. 
U.S. Department of Commerce, 
National Institute of Standards and Technology, 
Washington, DC, 
Cambridge: Cambridge University Press, 2010; 
available at \url{http://dlmf.nist.gov}

 \bibitem{HKPV} Hough,~J.~B., Krishnapur,~M., Peres,~Y. and
 	Vir\'{a}g,~B., \textit{Zeros of Gaussian analytic functions
 	and determinantal point processes}. University
 	Lecture Series, 51. American Mathematical Society,
 	Providence, RI, 2009. 
\bibitem{K09} Krishnapur,~M., From random matrices to random
	analytic functions. \textit{Ann. Probab.} \textbf{37} (2009),
	314--346.  
\bibitem{M75} Macchi,~O., The coincidence approach to
	stochastic point processes.  
	\textit{Adv. Appl. Prob.} \textbf{7} (1975), 83--122. 
\bibitem{S16} Shirai,~T.,  
	Ginibre-type point processes and their asymptotic
	behavior. 
	\textit{Journal of the Mathematical Society of
	Japan}, \textbf{67} (2015), 763--787. 
\bibitem{ST0} Shirai,~T. and Takahashi,~Y., 
	Fermion process and Fredholm determinant, \textit{Proceedings of the
	Second ISAAC Congress}, Vol. 1 (Fukuoka, 1999), 15--23, 
	Kluwer Acad. Publ., 2000. 
\bibitem{ST1} Shirai,~T. and Takahashi,~Y., 
	Random point fields associated with certain Fredholm determinants I: 
	fermion, Poisson and boson point processes, 
	\textit{J. Funct. Anal.} \textbf{205} (2003), 414--463. 
\bibitem{S} Soshnikov,~A., Determinantal random point
	fields. \textit{Translation in Russian Math. Surveys}
	\textbf{55} (2000), 923--975.  
\bibitem{Sz} Szeg\H{o},~G., \textit{Orthogonal Polynomials}. American
	Mathematical Society, Colloquium Publications \textbf{23}, 1939. 

\bibitem{Z01} Zelditch,~S., From random polynomials to
	symplectic geometry, \textit{XIIIth International
	Congress on Mathematical Physics (London, 2000)},
	367--376, Int. Press, Boston, MA, 2001.  


\end{thebibliography}
\end{document}